\numberwithin{equation}{section}
\renewcommand{\geq}{\geqslant}
\renewcommand{\leq}{\leqslant}
\DeclareMathOperator{\bbR}{\mathbb{R}}
\DeclareMathOperator{\bbQ}{\mathbb{Q}}
\DeclareMathOperator{\bbN}{\mathbb{N}}
\DeclareMathOperator{\bbE}{\mathbb{E}}
\DeclareMathOperator{\bbP}{\mathbb{P}}
\DeclareMathOperator{\cB}{\mathcal{B}}
\DeclareMathOperator{\cL}{\mathcal{L}}
\DeclareMathOperator{\fM}{\mathfrak{M}}
\DeclareMathOperator{\cM}{\mathcal{M}}
\DeclareMathOperator{\fS}{\mathfrak{S}}
\DeclareMathOperator{\Erf}{Erf}
\DeclareMathOperator{\eps}{\varepsilon}
\DeclareMathOperator{\diam}{diam}
\DeclareMathOperator{\ess}{ess}
\DeclareMathOperator{\Var}{Var}
\newtheorem{theorem}{Theorem}[section]
\newtheorem{definition}[theorem]{Definition}
\newtheorem{corollary}[theorem]{Corollary}
\newtheorem{lemma}[theorem]{Lemma}
\title{On the Minkowski content of self-similar random homogeneous iterated function systems}
\author{Sascha Troscheit\footnote{The author was financially supported by Austrian Science Fund (FWF) Lise Meitner Senior Fellowship
M-2813.}\\\texttt{sascha.troscheit@oulu.fi}}
\begin{document}

\maketitle
\begin{center}\small
  \vskip-2em 
  Research Unit of Mathematical Sciences, P.O.~Box 8000, FI-90014 University of Oulu, Finland.
\end{center}

\begin{abstract}
  The Minkowski content of a compact set is a fine measure of its geometric scaling. For Lebesgue
  null sets it measures the decay of the Lebesgue measure of epsilon neighbourhoods of the set.
  It is well known that self-similar sets, satisfying reasonable separation conditions and non-log
  comensurable contraction ratios, have a well-defined Minkowski
  content. When dropping the contraction conditions, the more general notion of average Minkowski
  content still exists.
  For random recursive self-similar sets the Minkowski content also exists almost surely, whereas
  for random homogeneous self-similar sets it was recently shown by Z\"ahle that the Minkowski
  content exists in expectation.

  In this short note we show that the upper Minkowski content, as well as the upper average Minkowski
  content of random homogeneous self-similar sets is infinite, almost surely, answering a conjecture
  posed by Z\"ahle.
  Additionally, we show that in the random homogeneous equicontractive self-similar setting the 
  lower Minkowski content is zero and the lower average Minkowski content is also infinite.
  These results are in stark contrast to the random recursive model or the mean behaviour of random
homogeneous attractors.\end{abstract}

\vskip2em

\section{Introduction}
The $s$-dimensional Minkowski content of a compact set $K\subset \bbR^d$ is defined as the limit 
\begin{equation}\label{eq:ContentDefinition}
  \cM^s (K) = \lim_{\eps\to 0}\eps^{s-d}\cL^d(\langle K\rangle_{\eps}),
\end{equation}
where $\langle K \rangle_{\eps}=\{x\in\bbR^d: \inf_{y\in K}d(x,y) \leq \eps\}$ is the (closed)
$\eps$-neighbourhood of $K$. 
If the limits exist for all $s\geq 0$, there exists a critical exponent
$s\geq 0$ such that $\cM^t(K)=0$ for all $t<s$ and $\cM^t(K)=\infty$ for all
$t>s$. This critical exponent is known as the Minkowski dimension of $K$, which coincides with the
box-counting dimension of $K$.

Note that the limit in \eqref{eq:ContentDefinition} may not exist. Instead, we may take upper and
lower limits giving the notions of upper and lower Minkowski content (denoted by $\overline{\cM}^s$
and $\underline{\cM}^s$, respectively).
The upper and lower Minkowski contents have well-defined critical exponents that are referred to as
the upper and lower Minkowski dimension (or upper/lower box-counting dimension). If these critical
exponents coincide, we speak of the Minkowski dimension of $K$. 
However, even if the Minkowski dimension of $K$ exists and therefore is the critical exponent of
the Minkowski content, the limit in \eqref{eq:ContentDefinition} may still not exist. 
Its existence is therefore a measure of ``regularity'' and constitutes an interesting property of a
sets. We say that a set $K$ is Minkowski measurable (with dimension $s$) if $\cM^s(K)\in
(0,\infty)$.
For more background on the Minkowski content and its relation to Zeta functions, see
\cite{Lapidus16}.

\paragraph{Self-similar sets.}
Self-similar sets are compact sets that are invariant under a finite collection of contracting
similarities $f_1,\dots,f_n$. That is, for maps satisfying $|f_i(x)-f_i(y)|=r_i|x-y|$ for all
$x,y\in\bbR^d$, the associated self-similar set $F$ is the unique non-empty compact set that satisfies
\[
  F = \bigcup_{i=1}^n f_i(F).
\]
These self-similar sets are the quintessential fractal sets that are widely studied, especially
under assumptions that limit the overlaps $f_i(F)\cap f_j(F)$.
We refer the reader to \cite{Falconer97} and \cite{Falconer13} for an overview of dimension
theoretic properties.

Let $f_i:[0,1]\to[0,1]$ be finite collection of contracting similarities on the unit interval
indexed by $i=1,\dots,N$ with contraction rates $r_i \in(0,1)$. Assume that the maps satisfy the 
separation condition: $f_i([0,1])\cap f_j([0,1]) = \varnothing$ whenever $i\neq j$.
Under these assumptions, Falconer \cite{Falconer95} showed
the following dichotomy:
If  two of the similarities have log-incomensurable contraction rates, i.e.\ $\log r_i/\log r_j 
\notin \bbQ$ for some $i,j$ (the non-arithmetic case), then $F$ is Minkowski measurable. 
Otherwise (the arithmetic case), there exists a non-trivial periodic function
$g$, such that $|\eps^{s-d}\cL^s(\langle F\rangle_{\eps}) - g(-\log\eps)|\to 0$.
In particular, the Minkowski content does not exist but the lower and upper Minkowski content are
positive and finite.
These results extend naturally to self-similar sets in higher dimensions, see \cite[Corollary
7.6]{Falconer97} and hold under less restrictive separation conditions such as the open set
condition\footnote{A collection of contractions $(f_i)$ with attractor $F$ satisfies the \textbf{open set
  condition} if there exists a non-empty open set $O$ such that   
  $f_i(O)\subseteq O$ and $f_i(O)\cap f_j(O)=\varnothing$
whenever $i\neq j$.},
see Gatzouras \cite{Gatzouras00}.
Further generalisations can be made such as replacing the $d$-dimensional Lebesgue measure with
$n$-dimensional ``curvatures'' (or ``intrinsic volumes'') that capture the lower-order behaviour of
fractal sets, see Winter \cite{Winter08} for details.

While the Minkowski content does not exist in the arithmetic case,
the behaviour of
$\eps^{s-d}\cL^d(\langle F\rangle_{\eps})$ is still
very regular. To consolidate these behaviours,
a slightly weaker definition of content is required, the average Minkowski content.

\paragraph{The average Minkowski content.}
The average Minkowski content of a compact set $K$ is given by
\begin{equation}\label{eq:DefinitionAverage}
  \fM^s(K) = \lim_{\delta\to 0}\frac{1}{|\log \delta|}\int_{\delta}^1\eps^{s-d}\cL^{d}(\langle
  K\rangle_{\eps})\frac{1}{\eps}d\eps.
\end{equation}
The average Minkowski content is an averaging over the decay of the Lebesgue measure, that 
preserves the critical exponent of a set if it exists. That is, if the Minkowski dimension of a set
$K$ is $s>0$, then $\fM^t(K)=0$ for $t<s$ and $\fM^t(K) =\infty$ for $t>s$.
Should the small scale behaviour be periodic, as is the
case for some self-similar sets, the average Minkowski content still exists.
As with the Minkowski content, we write $\underline{\fM}^s$ and $\overline{\fM}^s$ for the upper and
lower average Minkowski content, obtained by taking the upper and lower limit in
\eqref{eq:DefinitionAverage}, respectively.

\paragraph{Random attractors and results.}
It is a general observation that introducing randomness can homogenise local structures. This is
especially visible in stochastically self-similar sets, first considered in \cite{Falconer86,
Graf87,Mauldin86}.
These sets satisfy a similar invariance to deterministic self-similar sets given by
\[
  F =_d \bigcup f_i (F)
\]
where the equality holds in distribution, and the sets as well as the maps are independent.
These sets are generally well-behaved, have positive and finite Hausdorff measure for an appropriate
gauge function \cite{Graf88} and are almost surely Minkowski measurable \cite{Gatzouras00}. We will
refer to this model of randomness as the random recursive model.

Another important model of stochastic self-similarity is that of random homogeneous sets, also known as
$1$-variable sets (named after the more general notion of $V$-variable sets \cite{Barnsley12}).
In this model, at every iteration step only one family of functions is chosen (independently of
other levels) and applied to every subset. This introduces geometric dependencies and was shown to
behave rather differently to the random recursive model. In particular, the almost sure behaviour is
determined by a ``geometric expectation'' ($\exp\bbE(\log(.))$) as opposed to the ``arithmetic
expectation'' ($\bbE$) for random recursive sets.
The Hausdorff and Minkowski dimension of random homogeneous attractors are
in general strictly smaller than their random recursive analogue \cite{Hambly92,Roy11,Troscheit17}
and there is no gauge function that gives positive and finite Hausdorff
measure for random homogeneous sets \cite{Troscheit21}.
In \cite{Zahle20}, Z\"ahle investigated the Minkowski and average Minkowski content in
the random homogeneous setting.
If one assumes a non-arithmetic condition, which is e.g.\ satisfied if there is positive probability
that an IFS is chosen with log-incomensurable contraction ratios, the Minkowski
content of the random attractor exists in expectation.
Z\"ahle further showed that the average Minkowski content exists in
expectation independent of the non-arithmetic condition.
We point out that these results are only achieved in expectation, and not almost surely.
In fact, the critical exponents no longer agree: the almost sure Minkowski dimension
of the random homogeneous attractor is strictly smaller than the expected Minkowski dimension of the
attractor.
For related work, see also Z\"ahle \cite{Zahle11} and Rataj, Winter, and Z\"ahle \cite{Rataj21}.
In \cite{Zahle20}, Z\"ahle further conjectured that the almost sure Minkowski content does not exist for random
homogeneous self-similar sets and in this article we show that this is indeed the case.

In particular we show, under mild separation assumptions, but no assumptions on the non-arithmetic
nature of the contractions, that the upper Minkowski and average 
Minkowski content of random homogeneous self-similar sets is infinite. 
In the special case where the random functions are equicontractive at each construction level, we
additionally show that the lower Minkowski content is zero and that the lower average Minkowski
content is infinite.

\section{Definitions and Results}
\subsection{Notation}
Let $0<r_{\min}<r_{\max}<1$ and let
$S_d$ be the set of contracting similarities $f:\bbR^d\to\bbR^d$ that map the closed unit ball
$K=B(0,1)$ into itself ($f(K)\subseteq K$) and have contraction ratio bounded above by $r_{\max}$
and below by $r_{\min}$.
Equip $S_d$
with the topology of pointwise convergence and write $\cB_d$ for its Borel $\sigma$-algebra. Let
$\Lambda =
\bigcup_{k=1}^\infty S_d^k$ and let $\cB_d^*=\{B\subset \Lambda : B\cap S_d^k \in (\cB_d)^k, \forall
k\in\bbN\}$ be the natural $\sigma$-algebra on $\Lambda$. 
Let $\bbP_1$ be a probability measure on $(\Lambda,\cB_d^*)$.

The product space $(\Omega,\cB,\bbP) = (\Lambda,\cB_d^*,\bbP_1)^{\bbN}$ where each
realisation 
\[
  \Omega\ni\omega = (\omega_1,\omega_2,\dots) =
  ((f_{\omega_1}^1,f_{\omega_1}^2,..,f_{\omega_1}^{N_{\omega_1}}),\;
  (f^1_{\omega_2},\dots,f^{N_{\omega_2}}_{\omega_2}),\dots)
\]
is a sequence of $N_{\omega_i}$ many similarities denoted by $f_{\omega_i}^j$ for
$j\in\{1,\dots,N_{\omega_i}\}$. We call $(\Omega,\cB,\bbP)$ a random iterated function system
(RIFS).
For convenience we will write $\Sigma_\lambda=\{1,\dots,N_{\lambda}\}$ and
$\Sigma(\omega)=\Sigma_{\omega_1}\times\Sigma_{\omega_2}\times \dots$ to refer to the indices of
the maps in $\lambda\in\Lambda$ and their infinite codings.
For $v\in\Sigma_\lambda$, we write $r_{\lambda}^v$ for the contraction ratio of $f_{\lambda}^v$,
i.e.\ $|f_\lambda^v(x) - f_{\lambda}^{v}(y)| = r_\lambda^v |x-y|$ for all $x,y\in\bbR^d$.

To define the homogeneous random attractor $F_\omega$ we define the projection
$\pi:\Omega\times\Sigma(\omega)\to\bbR^d$, given by
\[
  \pi((\omega,v)) = \lim_{n\to\infty} f_{\omega}^{v|_n}(0) = \lim_{n\to\infty}
  f_{\omega_1}^{v_1}\circ \dots \circ f_{\omega_n}^{v_n}(0).
\]
Since all $f_{\lambda}^w$ are strict contractions on a compact set, the limit is well-defined.
The attractor $F_\omega$ is then given by the projection of all words
\[
  F_\omega = \bigcup_{v\in\Sigma(\omega)}\pi( (\omega,v) ).
\]
Choosing $\omega$ with law $\bbP$, gives rise to the random attractor $F_{\omega}$.

The set may equivalently be defined as the $\limsup$ set of covers of increasing
levels.
We set 
\[
  F_\omega^n = \bigcup_{v\in\Sigma(\omega)}f_{\omega}^{v|_n}(K)
  =\bigcup_{v\in\Sigma(\omega)}f_{\omega_1}^{v_1}\circ \dots\circ f_{\omega_n}^{v_n}(K)
\]
Then $F_\omega \subseteq F_{\omega}^n$ for all $n\in\bbN$ since $f_{\lambda}^j(K)\subseteq K$.
Further, $d_H(F_\omega,F_\omega^n)\to 0$ as
$n\to\infty$, where $d_H$ is the Hausdorff distance. This gives the alternative definition
\[
  F_\omega = \lim_{n\to\infty}F_\omega^n = \bigcap_{n\in\bbN}F_\omega^n = \lim_{n\to\infty}
  \bigcap_{i=n}^\infty  \bigcup_{v\in\Sigma(\omega)}f_{\omega}^{v|_n}(K).
\]
To study its Minkowski content, we make
the following assumption on how images for distinct words are separated.
\begin{definition}
  We say that a RIFS $(\Omega,\cB, \bbP)$ satisfies the \textbf{uniform cylinder
  separation condition} if there exist $\gamma>0$ and $x_0\in\bbR^d$ such that for $\bbP$-almost all
  $\omega\in\Omega$ and for all $v,w\in\Sigma(\omega)$, $n,m\in\bbN$,
  \[
    |f_{\omega}^{v|_n}(x_0)-f_{\omega}^{w|_m}(x_0)| \geq \gamma \min\{ r_{\omega_1}^{v_1}\dots
      r_{\omega_n}^{v_n}, r_{\omega_{1}}^{w_1}\dots
    r_{\omega_m}^{w_m}\}
  \]
  whenever $v|_k\neq w|_k$ for $k=\min\{n,m\}$.
\end{definition}
Several common separation conditions such as the uniform strong separation
condition as well as the uniform open set condition\footnote{A RIFS $(\Omega,\cB,\bbP)$ 
  satisfies the uniform open set
  condition if there exists an non-empty open set $O$ such that $\bbP_1$-almost every IFS $\lambda\in\Lambda$
satisfies the open set condition with $O$.} satisfy this condition.
It is an adaptation of the  weak separation condition (as used in \cite{Angelevska20}) for random
sets. In the deterministic setting, the weak separation condition is an important generalisation of
the open set condition, and we refer the reader to \cite{Kaenmaki16} for a discussion.
In Lemma \ref{thm:separation} we show that the
uniform open set condition implies the uniform cylinder separation condition.

As a further assumption, we need to ensure that the random iterated function is indeed ``random''.
\begin{definition}
  We say that an RIFS $(\Omega,\cB,\bbP)$ is \textbf{almost deterministic} if there
  exists $s\in\bbR$ such that 
  \[
    \bbP_1\left\{\lambda\in\Lambda : \sum_{i=1}^{N_\lambda}(r_{\lambda}^{i})^s = 1 \right \} =1
  \]
  Conversely, an RIFS is \textbf{not almost deterministic} if there exists no such $s\in\bbR$.
\end{definition}

Therefore, assuming that our RIFS is not almost deterministic means we get a bona-fide random set
where coverings are (almost surely) not geometrically similar.
To avoid trivial singleton attractors, and to simplify calculations we will also make the assumption that 
\begin{equation}\label{eq:supAssumption}
  2\leq \underset{\lambda\sim\bbP_1}\ess\sup_{\lambda\in\Lambda} N_{\lambda}<\infty.
\end{equation}
Recall that the Minkowski dimension of all random homogeneous sets exists almost surely and
coincides with the almost sure Hausdorff dimension irrespective of overlap conditions, see
\cite{Troscheit17}. We refer to this almost sure value as the \textbf{essential Minkowski dimension}
of the RIFS $(\Omega,\cB,\bbP)$ and usually denote it by $s = \ess_{\omega\sim \mathbb{P}}\dim_M
F_\omega$.

Our main result for general self-similar random iterated function systems is
\begin{theorem}\label{thm:general}
  Let $F_\omega$ be the attractor of the self-similar random iterated function system
  $(\Omega,\cB,\bbP)$. Write $s$ for its essential Minkowski dimension.
  Assume that the RIFS satisfies the
  uniform cylinder separation condition, is not almost deterministic, and satisfies
  \eqref{eq:supAssumption}.  Then, almost surely,
  \[
    \overline{\cM}^s(F_\omega) = \infty
%    \limsup_{\eps\to 0} \eps^{s-d}\cL^d(\langle F_\omega \rangle_{\eps}) = \infty
    \quad\text{and}\quad
    %\limsup_{\delta\to0}\frac{1}{|\log \delta|}\int_{\delta}^1\eps^{s-d}\cL^d(\langle F_\omega
    %\rangle_{\eps})\frac{1}{\eps}d\eps=\infty.
    \overline{\fM}^s(F_\omega) = \infty
  \]
  In particular, the attractor $F_\omega$ is almost surely not Minkowski measurable and does not
  have finite average Minkowski content.
\end{theorem}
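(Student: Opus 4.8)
The plan is to exploit the defining feature of the random homogeneous model: at each level $n$ a single family $\omega_n$ is chosen, so the covering $F_\omega^n$ is built from a single contraction-ratio vector $(r_{\omega_n}^1,\dots,r_{\omega_n}^{N_{\omega_n}})$ applied uniformly across all cylinders. Consequently, if $\rho_n(\omega) = \max_{v\in\Sigma(\omega)} r_{\omega_1}^{v_1}\cdots r_{\omega_n}^{v_n}$ denotes the largest level-$n$ cylinder diameter, then the $\eps$-neighbourhood estimate at scale $\eps \approx \rho_n$ is governed by the number of level-$n$ cylinders, which is the product $\prod_{k=1}^n N_{\omega_k}$, while the "correct" normalisation $\eps^{s-d}$ corresponds to a different (random recursive) exponent. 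The mismatch is the source of the blow-up. Concretely, I would fix a scale $\eps_n = \rho_n$, use the uniform cylinder separation condition to assert that the $\eps_n$-neighbourhoods of distinct level-$n$ cylinders are essentially disjoint (up to a bounded overlap constant depending on $\gamma$ and $d$), so that
\[
  \cL^d(\langle F_\omega\rangle_{\eps_n}) \;\gtrsim\; \Big(\prod_{k=1}^n N_{\omega_k}\Big)\,\eps_n^{d}\,c,
\]
whence $\eps_n^{s-d}\cL^d(\langle F_\omega\rangle_{\eps_n}) \gtrsim \big(\prod_{k=1}^n N_{\omega_k}\big)\,\eps_n^{s}$. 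Then I would show this quantity has $\limsup = \infty$ almost surely.

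The heart of the argument is therefore a law-of-large-numbers / fluctuation statement for $\log\big(\prod_{k=1}^n N_{\omega_k}\big) + s\log\rho_n = \sum_{k=1}^n \big(\log N_{\omega_k} + s\log\rho_k/\rho_{k-1}\big)$, a sum of i.i.d.\ terms $X_k = \log N_{\omega_k} + s\log(\rho_k/\rho_{k-1})$. By the characterisation of $s$ from \cite{Troscheit17} as the a.s.\ Minkowski dimension of the random homogeneous attractor, $s$ is the unique zero of $\lambda\mapsto \bbE[\log\sum_i (r_{\omega_1}^i)^\lambda]$, i.e.\ $\bbE\big[\log\sum_i (r_{\omega_1}^i)^s\big]=0$. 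Since $N_\lambda (\max_i r_\lambda^i)^s \geq \sum_i (r_\lambda^i)^s$ with equality only in the equicontractive case, and since the RIFS is \emph{not almost deterministic}, the inequality $N_{\omega_1}\rho_1^s \geq \sum_i (r_{\omega_1}^i)^s$ is strict on a set of positive $\bbP_1$-measure; taking logarithms and expectations gives $\bbE[X_1] > 0$. (Here $\rho_1/\rho_0 = \max_i r_{\omega_1}^i$.) Also $X_1$ is bounded — $\log N$ is bounded by \eqref{eq:supAssumption} and $\log\max_i r_{\omega_1}^i \in [\log r_{\min}, \log r_{\max}]$ — hence integrable and with positive variance unless everything is deterministic, which is excluded. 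By the strong law of large numbers $\frac1n\sum_{k=1}^n X_k \to \bbE[X_1] > 0$ a.s., so $\sum_{k=1}^n X_k \to +\infty$, i.e.\ $\big(\prod N_{\omega_k}\big)\rho_n^s \to \infty$ a.s. Combined with the geometric lower bound above, $\eps_n^{s-d}\cL^d(\langle F_\omega\rangle_{\eps_n}) \to \infty$ along the subsequence $\eps_n = \rho_n \to 0$, which already gives $\limsup_{\eps\to0} \eps^{s-d}\cL^d(\langle F_\omega\rangle_{\eps}) = \infty$ almost surely — in fact a limit along this subsequence.

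For the average Minkowski content, note that divergence along a subsequence is not by itself enough, since the average could wash it out; the fix is that the blow-up persists over a whole \emph{range} of scales below each $\rho_n$. Since $\rho_n/\rho_{n+1} \leq r_{\min}^{-1}$ is bounded, for $\eps \in [\rho_{n+1}, \rho_n]$ the same packing estimate with the level-$n$ cylinders gives $\cL^d(\langle F_\omega\rangle_\eps) \gtrsim \big(\prod_{k=1}^n N_{\omega_k}\big)\rho_{n+1}^d \gtrsim \big(\prod_{k=1}^n N_{\omega_k}\big)\eps^d r_{\min}^d$, so $\eps^{s-d}\cL^d(\langle F_\omega\rangle_\eps) \gtrsim \big(\prod_{k=1}^n N_{\omega_k}\big)\eps^s \gtrsim \big(\prod_{k=1}^n N_{\omega_k}\big)\rho_n^s r_{\min}^s$ on the full interval $[\rho_{n+1},\rho_n]$, which has logarithmic length $\log(\rho_n/\rho_{n+1}) \geq |\log r_{\max}| > 0$ bounded below. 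Hence the integrand in \eqref{eq:DefinitionAverage} is at least a fixed multiple of $\big(\prod_{k=1}^n N_{\omega_k}\big)\rho_n^s$ on a logarithmic block of uniformly positive length, and since this quantity $\to\infty$, a Cesàro/Toeplitz argument shows $\frac{1}{|\log\delta|}\int_\delta^1 (\cdots)\frac{d\eps}{\eps} \to \infty$ as $\delta\to0$ along $\delta=\rho_{n+1}$, giving $\limsup = \infty$ (again a genuine limit). The main obstacle I anticipate is making the "essentially disjoint $\eps_n$-neighbourhoods" step fully rigorous: one must control not only cylinders at the same level $n$ but also the interaction with the limit set $F_\omega$ inside each cylinder, i.e.\ justify that $\langle F_\omega\rangle_{\eps_n} \supseteq \bigcup_v f_\omega^{v|_n}(K)$-type lower bound up to bounded multiplicity, using that each level-$n$ cylinder has diameter comparable to $\eps_n$ and contains at least one point of $F_\omega$; this is where the uniform cylinder separation condition and the choice $\eps_n = \rho_n$ (rather than something smaller) are used, and care is needed because cylinders at level $n$ have \emph{different} sizes, so one should really compare $\eps$ to each cylinder's own diameter and sum a packing bound, absorbing constants into $c = c(\gamma, d, r_{\min})$.
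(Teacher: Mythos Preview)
Your proposal has two genuine gaps, one geometric and one probabilistic, and together they break the argument outside the equicontractive case.

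\textbf{The packing lower bound is not valid.} You set $\eps_n=\rho_n=\prod_k \max_i r_{\omega_k}^i$ and claim $\cL^d(\langle F_\omega\rangle_{\eps_n})\gtrsim \big(\prod_k N_{\omega_k}\big)\eps_n^d$ via bounded overlap of $\eps_n$-balls, one per level-$n$ cylinder. But level-$n$ cylinders have wildly different sizes: a cylinder obtained by repeatedly following small contractions has diameter $\prod_k\min_i r_{\omega_k}^i\ll\rho_n$, and exponentially many such cylinders can sit inside a single ball of radius $\rho_n$. The uniform cylinder separation condition only separates two cylinders by $\gamma$ times the \emph{smaller} of their contraction ratios, so it gives no $\rho_n$-separation between small cylinders, and the overlap multiplicity is unbounded. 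You flag this (``cylinders at level $n$ have different sizes'') but do not resolve it. The paper's fix is essential here: it builds a Bernoulli measure $\mu_\omega$ on $\Sigma(\omega)$ and uses the Lyapunov CLT to isolate a set $M_n$ of level-$n$ words whose contraction products all lie within $e^{\pm\sqrt{\sum v_{\omega_i}}}$ of the geometric mean $\prod_i\overline r_{\omega_i}$; it then takes $\eps_n$ at \emph{that} common scale, so the separation lemma applies and gives $\#M_n$ genuinely $\eps_n$-separated points.

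\textbf{``Not almost deterministic'' does not imply ``not equicontractive''.} You deduce $\bbE[X_1]>0$ from $N_\lambda(\max_i r_\lambda^i)^s\ge\sum_i(r_\lambda^i)^s$ with equality only in the equicontractive case, and then invoke non-almost-determinism to force strict inequality on a positive-measure set. But non-almost-determinism only says there is no $s$ with $\fS_\lambda^s=1$ a.s.; it does not preclude every $\lambda$ being equicontractive (e.g.\ two equicontractive IFSs with different common ratios, each chosen with probability $1/2$). In such systems $X_k=\log\fS_{\omega_k}^s$ exactly, so $\bbE[X_1]=0$ and the SLLN gives nothing. One then needs the $\limsup$ behaviour of a mean-zero random walk---this is precisely why the paper invokes the law of the iterated logarithm on $\sum_k\log\fS_{\omega_k}^s$ to extract a subsequence along which the content grows like $e^{C\sqrt{n\log\log n}}$. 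Your argument, as written, is essentially the paper's (simpler) proof of Theorem~\ref{thm:equicontractive}, where all level-$n$ cylinders share one size and the packing step is legitimate; it does not cover Theorem~\ref{thm:general}.
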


If we further restrict the random iterated function system to be equicontractive for every
$\lambda\in\Lambda$, we can say more on the Minkowski content and average Minkowski content.
\begin{theorem}\label{thm:equicontractive}
  Let $F_\omega$ be the attractor of the self-similar random iterated function system
  $(\Omega,\cB,\bbP)$. Write $s$ for its essential Minkowski dimension.
  Assume that the RIFS satisfies the
  uniform cylinder separation condition, is not almost deterministic, and satisfies
  \eqref{eq:supAssumption}.
  Assume further that for $\bbP_1$-almost all $\lambda\in\Lambda$ there exists $r_\lambda$  such that
  $r_\lambda^i=r_\lambda$ for all $1\leq i \leq N_\lambda$.
  Then, almost surely,
  \[
    %\liminf_{\eps\to 0} \eps^{s-d}\cL^d(\langle F_\omega \rangle_{\eps}) = 0
    \underline{\cM}^s (F_\omega) = 0
    \quad\text{and}\quad
    %\liminf_{\delta\to0}\frac{1}{|\log \delta|}\int_{\delta}^1\eps^{s-d}\cL^d(\langle F_\omega
    %\rangle_{\eps})\frac{1}{\eps}d\eps=\infty.
    \underline{\fM}^s (F_\omega) =\infty.
  \]
\end{theorem}
Combining Theorems \ref{thm:general} and \ref{thm:equicontractive} we obtain
\begin{corollary}
  Let $F_\omega$ be the attractor of an RIFS satisfying the assumptions of Theorem \ref{thm:equicontractive}.
  The upper and lower Minkowski content of $F_\omega$ are almost surely as divergent as possible
  \[
    0=\underline{\cM}^s(F_\omega)<\overline{\cM}^s(F_\omega) = \infty
  \]
  whereas the average Minkowski content
  is infinite, 
  $\fM^s(F_\omega) = \infty$, almost surely.
\end{corollary}

Note that this is in stark contrast to the random recursive model where, almost surely, the
stochastically self-similar set is Minkowski measurable, \cite{Gatzouras00}.
Note also that this shows that the almost surely behaviour is drastically different to the
behaviour in expectation, as analysed by Z\"ahle \cite{Zahle20}.

\paragraph{$V$-variable sets and random set with a neck structure.}
Random homogeneous iterated function systems are a special case of the more general set up of
$V$-variable attractors, proposed by Barnsley et al.~\cite{Barnsley12}. This in turn can be
generalised to code trees with a neck structure, see e.g.\ \cite{Jarvenpaa17}. The defining feature
of these is a relaxing of the condition that all subtrees at a level have to be identical, as is
the case for random homogeneous (or $1$-variable) attractors. $V$-variable fractals are conditioned
to have at most $V$ different subtrees at every construction level, whereas code trees with necks are
those attractors where there are infinitely many levels (the necks) where all subtrees are
identical.

It is this recurrent structure that was used in \cite{Troscheit21} to show that the Hausdorff
measure cannot be positive and finite, regardless of gauge functions. 
It appears as though there are no barriers to extending the observations in this paper
to $V$-variable
attractors and code trees with necks, and we conjecture that they, too, have infinite upper Minkowski
and upper average Minkowski
content.

\section{Proofs}
As remarked above, the essential Hausdorff and Minkowski dimensions of 
homogeneous random self-similar sets coincide and are given by the unique $s$ for which
\[
  \bbE_{\bbP_1}\left(\log \sum_{i=1}^{N_\lambda} (r_{\lambda}^i)^s \right):=
  \int_{\Lambda}\log \sum_{i=1}^{N_\lambda} (r_{\lambda}^i)^s \; d\bbP_1(\lambda)=0,
\]
see
e.g.\ \cite{Hambly92,Troscheit17}.
To ease notation, we will refer to the Hutchinson sum above by 
\[
  \fS^s_\lambda : = \sum_{i=1}^{N_\lambda} (r_{\lambda}^i)^s
\]
If the random iterated function system is not almost deterministic, $\log\fS^s_{\lambda}$
is a random variable with mean $0$ and there exists positive probability that $\log\fS_\lambda^s
\neq 0$, i.e.\ it has positive variance. To show that its variance is also finite, consider
\[
  \bbE_{\bbP_1}\left( \left( \log \sum_{v=1}^{N_\lambda}(r_{\omega_1}^v)^s \right)^2 \right) 
  \leq  \max\left\{ (\log r_{\min}^s)^2, (\log(N r_{\max}^s))^2 \right\} <\infty. 
\]

Recall the Lyapunov Central Limit Theorem
(CLT) and the law of the iterated logarithm.
\begin{theorem}[Lyapunov Central Limit Theorem]
  Let $X_n$ be a sequence of square integrable random variables with mean $m_n$ and variance
  $v_n>0$.
  Assume that there exists $\delta>0$ such that
  \[
    \lim_{n\to\infty}\left(\sqrt{\sum_{i=1}^n
    v_i}\right)^{-(2+\delta)}\cdot\sum_{i=1}^n\bbE(|X_i-m_i|^{2+\delta}) =0.
  \]
  Then, 
  \[
    \frac{1}{\sqrt{\sum_{i=1}^nv_n}}\sum_{i=1}^n (X_i-m_i)
  \]
  converges in distribution to the normal distribution with mean $0$ and variance $1$.
\end{theorem}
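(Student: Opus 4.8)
The plan is to deduce this from the Lindeberg--Feller central limit theorem, whose proof I would carry out by the method of characteristic functions. Throughout I may assume $m_i = 0$, since replacing $X_i$ by $X_i - m_i$ alters neither hypothesis nor conclusion. Write $s_n^2 = \sum_{i=1}^n v_i$ and set $L_n = s_n^{-(2+\delta)}\sum_{i=1}^n \bbE(|X_i|^{2+\delta})$, so the Lyapunov hypothesis is exactly $L_n \to 0$. The goal is to show $S_n := s_n^{-1}\sum_{i=1}^n X_i$ converges in distribution to the standard normal, and by L\'evy's continuity theorem it suffices to prove that the characteristic function $\prod_{i=1}^n \varphi_i(t/s_n)$ of $S_n$ converges to $e^{-t^2/2}$ for each $t \in \bbR$, where $\varphi_i$ is the characteristic function of $X_i$.

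The first step is to extract the Lindeberg condition from the Lyapunov hypothesis, which works for every $\delta > 0$. On the event $\{|X_i| > \eps s_n\}$ one has $1 \le (|X_i|/(\eps s_n))^{\delta}$, whence $X_i^2 \le |X_i|^{2+\delta}/(\eps s_n)^{\delta}$ there, and summing gives
\[
  \frac{1}{s_n^2}\sum_{i=1}^n \bbE\bigl(X_i^2\, \bbI_{\{|X_i| > \eps s_n\}}\bigr) \le \frac{1}{\eps^{\delta}}\cdot s_n^{-(2+\delta)}\sum_{i=1}^n \bbE\bigl(|X_i|^{2+\delta}\bigr) = \frac{L_n}{\eps^{\delta}} \longrightarrow 0
\]
for every fixed $\eps > 0$. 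As an immediate by-product, uniform negligibility holds: splitting $v_i = \bbE(X_i^2)$ over $\{|X_i|\le\eps s_n\}$ and its complement yields $\max_{i\le n} v_i/s_n^2 \le \eps^2 + \eps^{-\delta}L_n$, and letting first $n\to\infty$ and then $\eps\to 0$ shows $\max_{i\le n} v_i/s_n^2 \to 0$.

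The second step is the characteristic-function estimate. Using $\bbE(X_i) = 0$, $\bbE(X_i^2) = v_i$, and the elementary bound $|e^{ix} - 1 - ix + x^2/2| \le \min\{|x|^3, |x|^2\}$, I would split the expectation defining $\varphi_i(t/s_n) - (1 - t^2 v_i/(2s_n^2))$ according to whether $|X_i| \le \eps s_n$ or not: the first region contributes at most $\eps|t|^3 v_i/(2 s_n^2)$ (via the cubic bound) and the second at most $t^2\,\bbE(X_i^2 \bbI_{\{|X_i|>\eps s_n\}})/s_n^2$ (via the quadratic bound). Summing over $i$, using $\sum_i v_i/s_n^2 = 1$ and the Lindeberg bound just obtained, the total error is at most $\eps|t|^3/2 + t^2 L_n/\eps^{\delta}$, which is arbitrarily small on letting $n\to\infty$ then $\eps\to0$. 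The product-comparison lemma $|\prod a_i - \prod b_i| \le \sum|a_i - b_i|$ for $|a_i|, |b_i| \le 1$ then reduces matters to showing $\prod_{i=1}^n (1 - t^2 v_i/(2s_n^2)) \to e^{-t^2/2}$; since the factors are eventually close to $1$ by uniform negligibility, taking logarithms and using $\log(1-x) = -x + O(x^2)$ together with $\sum_i (v_i/s_n^2)^2 \le \max_i (v_i/s_n^2) \to 0$ gives the exponent $-t^2/2$.

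I expect the only real obstacle to be the careful handling of the error terms in the characteristic-function expansion --- in particular justifying the passage to the complex logarithm, which requires the uniform negligibility $\max_i v_i/s_n^2 \to 0$ established in the first step, and verifying that the quadratic remainder sums to zero. The conceptual content lies entirely in the Lyapunov-implies-Lindeberg computation; everything after it is the standard Lindeberg--Feller argument, which could alternatively be invoked as a black box from any standard reference once the Lindeberg condition has been verified.
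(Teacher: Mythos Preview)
Your argument is correct and follows the standard route: show that the Lyapunov condition implies the Lindeberg condition, then run the Lindeberg--Feller proof via characteristic functions. The paper, however, does not prove this theorem at all --- it is quoted as a background result with a reference to \cite[\S28]{Bauer}. So there is no in-paper proof to compare against; what you have written is essentially the textbook argument one would find in that reference.

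One small omission worth flagging, present in the paper's statement as well as in your write-up: independence of the $X_i$ is never stated explicitly, yet your proof relies on it when you write the characteristic function of $S_n$ as the product $\prod_{i=1}^n \varphi_i(t/s_n)$. The theorem is of course false without independence, so you should make that hypothesis explicit. A second minor point: the constant in your cubic-region estimate is slightly off (the bound $|e^{ix}-1-ix+x^2/2|\le |x|^3$ yields $\eps|t|^3 v_i/s_n^2$, not $\eps|t|^3 v_i/(2s_n^2)$), though this is irrelevant to the argument since any fixed constant suffices.
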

For a proof and detailed discussion see, for example, \cite[\S28]{Bauer}.
Note that the theorem above makes the assumption that the variance is positive for all $n$. This can
without loss of generality be relaxed to $v_n =0$ for some $n$, so long as $\sum v_n \to \infty$.
This is because if $\nu_n=0$ then $X_n-m_n=0$ almost surely.

We will also need the law of the iterated logarithm (LIL).
\begin{theorem}[Wittmann Law of the Iterated Logarithm \cite{Wittmann85}]
  Let $X_n$ be a sequence of square integrable random variables with mean $m_n$ and variance
  $v_n>0$. Assume that the sequence satisfies
  \[
    \sum_{i=1}^{\infty}\frac{\bbE(|X_i|^p)}{(2 s_n\log \log s_n)^{p/2}} <\infty
  \]
  for some $2<p\leq 3$, where $s_n = \sum_{i=1}^n v_i$. Assume further that $s_n\to\infty$ and
  $\limsup_{n\to\infty}s_{n+1}/s_n < \infty$.
  Then,
  \[
    \limsup_{n\to\infty}\frac{\sum_{i=1}^n X_i}{\sqrt{2s_n\log\log s_n}} =1
  \]
  and
  \[
    \liminf_{n\to\infty}\frac{\sum_{i=1}^n X_i}{\sqrt{2s_n\log\log s_n}} =-1.
  \]
\end{theorem}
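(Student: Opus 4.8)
The plan is to follow the classical two-sided strategy for the law of the iterated logarithm, splitting the claim into the upper bound $\limsup_n S_n/\phi(s_n) \le 1$ and the lower bound $\limsup_n S_n/\phi(s_n) \ge 1$, where $S_n=\sum_{i=1}^n X_i$ and $\phi(t)=\sqrt{2t\log\log t}$. The $\liminf$ statement then follows at once by applying the $\limsup$ result to the sequence $-X_n$, which has means $-m_n$ and identical variances $v_n$ and absolute moments $\bbE(|X_i|^p)$, so the hypotheses are preserved. Since the summability hypothesis is phrased in terms of $\bbE(|X_i|^p)$ rather than centred moments, I would first recenter to the case $m_n=0$, checking that the accumulated means are negligible on the scale $\phi(s_n)$; independence of the $X_n$, which is implicit in a statement of this type, is used throughout.

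For the upper bound I would argue via an exponential (Chernoff) inequality combined with a geometric blocking argument. The idea is to estimate $\bbP\big(S_n \ge (1+\eps)\phi(s_n)\big)$ by optimising $e^{-\lambda(1+\eps)\phi(s_n)}\prod_{i\le n}\bbE(e^{\lambda X_i})$ at $\lambda=\lambda_n \asymp \phi(s_n)/s_n=\sqrt{2\log\log s_n/s_n}$, so that the leading term behaves like $\exp\big(-(1+\eps)^2\log\log s_n\big)$. To make the logarithmic moment-generating expansion $\log\bbE(e^{\lambda X_i})\approx \tfrac12\lambda^2 v_i$ legitimate one truncates the $X_i$; the top layer of the truncation is discarded almost surely through the first Borel--Cantelli lemma, which is where the moment summability $\sum_i \bbE(|X_i|^p)/(2s_i\log\log s_i)^{p/2}<\infty$ feeds in via Markov's inequality. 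Passing to a subsequence $n_k$ with $s_{n_k}\asymp\theta^k$ for some $\theta>1$ renders these tail probabilities summable in $k$, and a Lévy--Ottaviani maximal inequality over each block $(n_k,n_{k+1}]$ upgrades the subsequence bound to all $n$. This is the step that uses $\limsup_n s_{n+1}/s_n<\infty$: it guarantees that $s_{n_k}\le C\theta^k$ as well, so consecutive blocks have comparable variance and no single increment dominates. Letting $\eps\to 0$ and $\theta\to 1$ yields $\limsup_n S_n/\phi(s_n)\le 1$ almost surely.

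For the lower bound I would use independence through the second Borel--Cantelli lemma. Along the same geometric subsequence the block increments $D_k=S_{n_{k+1}}-S_{n_k}$ are independent with $\Var(D_k)=s_{n_{k+1}}-s_{n_k}$, which is comparable to $s_{n_{k+1}}$ once $\theta$ is large. A matching lower bound on the tail of $D_k$, obtained either from the Lyapunov central limit theorem applied blockwise or from Kolmogorov's lower exponential inequality, shows that $\bbP\big(D_k\ge (1-\eps)\phi(s_{n_{k+1}})\big)$ is bounded below by a constant multiple of $k^{-(1-\eps)^2}$, whose sum over $k$ diverges. Independence and the second Borel--Cantelli lemma then force $D_k\ge(1-\eps)\phi(s_{n_{k+1}})$ for infinitely many $k$. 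Finally, the upper bound already proved, applied to $S_{n_k}$, shows $S_{n_k}=o\big(\phi(s_{n_{k+1}})\big)$ once $s_{n_k}/s_{n_{k+1}}\to 0$, so that $S_{n_{k+1}}=D_k+S_{n_k}\ge(1-2\eps)\phi(s_{n_{k+1}})$ infinitely often, giving $\limsup_n S_n/\phi(s_n)\ge 1$.

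The main obstacle, and the genuine content of Wittmann's refinement over the Kolmogorov--Hartman--Wintner LIL, is the truncation bookkeeping under only the $p$-th moment summability with $2<p\le 3$, rather than a bounded-increment or exponential-moment assumption. The difficulty is that no single truncation level can simultaneously be small enough for the quadratic expansion of $\log\bbE(e^{\lambda_n X_i})$ to hold uniformly at $\lambda_n\asymp\sqrt{\log\log s_n/s_n}$ and leave a discarded tail whose probabilities are summable; a layered truncation is needed, with the intermediate range controlled by the third moment, and one must verify that the correction term of order $\lambda_n^p\,\bbE(|X_i|^p)$ is exactly absorbed by the stated summability condition. Confirming that the hypotheses $\sum_i \bbE(|X_i|^p)/(2s_i\log\log s_i)^{p/2}<\infty$ and $\limsup_n s_{n+1}/s_n<\infty$ are precisely strong enough to make every layer of this estimate close, rather than merely sufficient, is the delicate part of the argument, and for the purposes of this note I would simply cite \cite{Wittmann85} for the full bookkeeping.
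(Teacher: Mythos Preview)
The paper does not prove this theorem at all: it is stated as a named result from the literature and attributed to Wittmann \cite{Wittmann85}, with no argument given beyond the remark that bounded $X_n$ and $s_n$ immediately satisfy the hypotheses. So there is nothing to compare your proposal against in this paper.

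That said, your sketch is a faithful outline of the classical two-sided strategy (exponential bound plus geometric blocking for the upper half, independent block increments plus the second Borel--Cantelli lemma for the lower half), and you correctly identify the only substantive difficulty: controlling the truncation layers under the weak $p$-th moment summability condition with $2<p\le 3$ rather than a bounded or exponential-moment assumption. Since you ultimately defer the bookkeeping to \cite{Wittmann85} anyway, your proposal amounts to the same thing the paper does, namely citing the result. One minor caveat: you describe independence as ``implicit in a statement of this type''; in Wittmann's theorem independence is indeed assumed, but since the paper's statement suppresses it, you should make that assumption explicit rather than leave it implicit if you were actually writing this out.
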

Again, we may let $v_n=0$ for some $n$, given that this does not affect the outcome of the sum. We
further note that for low $n$, the value of $\log\log n$ is not defined. Since we are only
interested in limits, we may assume $n$ is large enough such that this is well-defined.

We note that any sequence of random variables for which $X_n$ and $s_n$ are bounded immediately
satisfies the conditions of both theorems.

\subsection{Proof of Theorem~\ref{thm:general}}
To prove the Theorem~\ref{thm:general}, we first construct a random measure $\mu_\omega$ on the symbolic
space $\Sigma(\omega)$.

Recall that the Hutchinson sum satisfies
$\bbE(\log\fS_{\omega_1}^s)=0$, where $s$ is the essential Minkowski dimension.
The fact that $s>0$ follows directly from the assumptions that
$r_{\lambda}^i \geq r_{\min}$ and $\bbP_1(N_\lambda \geq2)>0$. The argument is standard and left to
the reader.
Note that the assumption that the RIFS is not almost deterministic
is equivalent, by definition, to $\fS^s_\lambda$ not being a constant $\bbP_1$-almost surely. 
Equivalently, the variation of $\fS_\lambda^s$ is positive.

To every letter $i\in \Sigma_\lambda$ we
associate probability $p_\lambda^i = (r_\lambda^{i})^s/\fS_\lambda^s$.
Then
\[
  \sum_{i\in\Sigma_\lambda} p_{\lambda}^i=
  \sum_{i\in\Sigma_{\lambda}}\frac{(r_{\lambda}^i)^s}{\fS_\lambda^s} = 1.
\]
Hence, the measure induced by setting $\mu_\omega([v|_n]) = \prod_{i=1}^n p_{\omega_i}^{v_i}$ for
cylinder $[v|_n] = \{w\in\Sigma(\omega) : (\forall 1\leq i\leq n)w_i = v_i\}$ is a
bona fide probability measure on $\Sigma(\omega)$.

Write 
\[
  \overline{r}_\lambda = \exp\sum_{i=1}^{N_\lambda} p_{\lambda}^i \log r_{\lambda}^i
\]
and 
\[
  v_\lambda = \sum_{i=1}^{N_\lambda} p_{\lambda}^{i}(\log r_{\lambda}^i -
  \log\overline{r}_\lambda)^2.
\]
The quantity $\overline{r}_\lambda$ is the geometric mean of the contraction rates with respect to
choosing letters $i\in\Sigma_\lambda$ with probabilities $\{p_{\lambda}^{i}\}$. Observe that 
$\log\overline{r}_\lambda$ is the (arithmetic) mean of the logarithms of the contraction rates.
The second quantity $v_\lambda$ then denotes the variance of the logarithm of
the contraction rate with respect to the same measure on $\Sigma_\lambda$.
Note that $v_{\lambda}$ may be zero. However, because we are considering systems which
are not almost deterministic, there exists positive probability that $v_\lambda>0$. This further
implies that for generic $\omega\in\Omega$ the sum $\sum_{i=1}^n v_{\omega_i}$ eventually grows faster than
$\eta n$ for some constant $\eta>0$.
The boundedness of the contraction ratios further imply
\[
  \lim_{k\to\infty}\frac{1}{(\sum_{k=1}^n v_{\omega_i})^{3}}\sum_{k=1}^n\sum_{i=1}^{N_\lambda} p_{\lambda}^{i}|\log r_{\lambda}^i -
  \log\overline{r}_\lambda|^3 = 0
\]
for generic $\omega\in\Omega$ and so the sum $\sum_{k=1}^n \log r_{\omega_k}^{v_n}$ satisfies the
Lyapunov Central Limit Theorem with respect to $\mu_\omega$.

Now write $M_n$ for the set
\[
  M_n = \left\{v\in\Sigma(\omega) : \prod_{i=1}^n r_{\omega_i}^{v_i} \in \left[ \prod_{i=1}^n
      \overline{r}_{\omega_i} \cdot e^{-\sqrt{\sum_{i=1}^n v_{\omega_i}}},\prod_{i=1}^n
  \overline{r}_{\omega_i}\cdot e^{\sqrt{\sum_{i=1}^n v_{\omega_i}}} \right]\right\}
\]
noting that
\[
  M_n = \left\{v\in\Sigma(\omega) : \frac{\sum_{i=1}^n ( \log r_{\omega_i}^{v_i}-\log
  \overline{r}_{\omega_i})}{\sqrt{\sum_{i=1}^n v_{\omega_i}}} \in [-1,1] \right\}.
\]
By the Lyapunov Central Limit Theorem, 
\[
  \mu_\omega(M_n) \to (2\pi)^{-1/2}\int_{-1}^{1}\exp(-x^2/2)dx=:q
  \quad\text{as}\quad n\to\infty.
\]
Hence, for large enough $n$, we have
\[
  q/2 \leq \mu_{\omega}(M_n) = \sum_{v\in M_n}\mu_\omega([v|_n]) = \sum_{v\in
  M_n}\frac{(r_{\omega_1}^{v_1}\dots
  r_{\omega_n}^{v_n})^s}{\fS_{\omega_1}^{s}\dots\fS_{\omega_n}^{s}}
  \leq \# M_n \frac{(\overline{r}_{\omega_1}\dots
  \overline{r}_{\omega_n})^s}{\fS_{\omega_1}^{s}\dots\fS_{\omega_n}^{s}}e^{\sqrt{\sum_{i=1}^{n}v_{\omega_i}}}
\]
and so
\[
  \# M_n \geq \frac{q}{2}\frac{ \fS_{\omega_1}^{s}\dots\fS_{\omega_n}^{s}}{(\overline{r}_{\omega_1}\dots
  \overline{r}_{\omega_n})^s}\cdot \exp\left[-\left(\sum_{i=1}^{n}v_{\omega_i} \right)^{1/2}\right]
\]
Recall that we are considering generic $\omega\in\Omega$ and that $\sum_{i=1}^{n}v_{\omega_i} \geq
\eta n$
for some $\eta>0$ and large enough $n\in\bbN$.
It is straightforward to show that all assumptions in the law of the iterated logarithm are satisfied for
generic $\omega\in\Omega$. Thus there exists a constant $C>0$ and a subsequence $n_k$ such that
$\fS_{\omega_{n_k}}^s \geq
\exp(C \sqrt{n_k\log\log n_k})$.
As a consequence, 
\[
  \# M_{n_k} \geq C' \exp\left(C\sqrt{n_k\log\log n_k}-\sqrt{\eta\, n_k}\right)(\overline{r}_{\omega_1}\dots
  \overline{r}_{\omega_{n_k}})^{-s}.
\]

We can use the uniform cylinder separation condition to obtain a lower bound on the cardinality of a
sufficiently separated set.
\begin{lemma}\label{thm:separationLemma}
  Let $(\Omega,\cB, \bbP)$ be a RIFS as in Theorem \ref{thm:general}.
  Then there exists a constant $C_d>0$ such that for $\bbP$-almost all $\omega\in\Omega$ the
  following holds.
  Let $A$ be a finite collection finite length words that are not prefixes of each other, i.e.\ 
  \[
    A\subseteq \bigcup_{n=1}^\infty \Sigma_{\omega_1}\times \dots \times\Sigma_{\omega_n}
    \;\;\text{ with distinct }\;\;
    v,w\in A \Rightarrow v|_{n}\neq w|_{n}, \; n=\min\{|v|,|w|\}.
  \]
  Then there exists a $(\gamma \min_{v\in A}r_{\omega}^{v})$-separated set $F_A\subset F_{\omega}$
  with $\#F_A \geq C_d\#A$.
\end{lemma}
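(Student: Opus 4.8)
The plan is to pass from the symbolic collection $A$ to a subset of the attractor $F_\omega$ by picking, for each word $v\in A$, the projection point $\pi((\omega, vw_v))$ of some arbitrary completion $w_v\in\Sigma(\omega)$ of $v$, and then to show that a fixed positive proportion of these points form a set that is separated at scale $\gamma\min_{v\in A} r_\omega^v$. Write $\rho = \min_{v\in A} r_\omega^v$. The first step is to record that, by the uniform cylinder separation condition applied to the two words $v, v'\in A$ (which are not prefixes of each other, so their common truncations differ), the corresponding points $x_v := f_\omega^v(x_0)$ and $x_{v'} := f_\omega^{v'}(x_0)$ satisfy $|x_v - x_{v'}| \ge \gamma \min\{r_\omega^v, r_\omega^{v'}\} \ge \gamma\rho$. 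The second step is to control the error introduced by replacing $x_v = f_\omega^v(x_0)$ with the honest attractor point $\pi((\omega, vw_v))$: since $f_\omega^v$ contracts by $r_\omega^v$ and maps $K$ into itself, both $x_v$ and $\pi((\omega,vw_v))$ lie in $f_\omega^v(K)$, a ball of radius $r_\omega^v \le r_{\max}^{|v|} \le 1$; more usefully, $|x_v - \pi((\omega,vw_v))| \le r_\omega^v \cdot \operatorname{diam}(K) = 2 r_\omega^v$. This is of the wrong order — it is comparable to $r_\omega^v$, not to $\gamma\rho$ — so the naive argument gives separation only up to an additive term that can swamp the $\gamma\rho$ lower bound.

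This is the main obstacle, and the way around it is the standard pigeonhole/packing trick: although individual attractor points may be displaced by as much as $2r_\omega^v$ from $x_v$, they still lie in the ball $B(x_v, 2 r_\omega^v) \subseteq B(x_v, 2)$, and the \emph{centres} $x_v$ are $\gamma\rho$-separated. I would not try to separate all of $A$; instead, fix a maximal $\gamma\rho$-separated (equivalently, a $\gamma\rho$-net) among a suitably chosen anchor family. Concretely, for each $v\in A$ let $y_v = \pi((\omega, vw_v)) \in F_\omega$ and note $|y_v - x_v| \le 2r_\omega^v$. Two such points $y_v, y_{v'}$ can fail to be $c\gamma\rho$-separated only if $|x_v - x_{v'}| \le c\gamma\rho + 2r_\omega^v + 2r_\omega^{v'}$. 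The trouble is the $r_\omega^v$ terms are not bounded by $\rho$ from above. To fix this, refine $A$ first: replace each word $v$ by an extension $\hat v \supseteq v$, $\hat v \in \Sigma_{\omega_1}\times\cdots\times\Sigma_{\omega_{m}}$, chosen so that $r_\omega^{\hat v} \in [r_{\min}\rho, \rho]$ — such an extension exists because appending one symbol multiplies the contraction ratio by a factor in $[r_{\min}, r_{\max}]$, so the ratios decrease through the window $[r_{\min}\rho,\rho]$; distinct $v\in A$ give distinct, non-prefix $\hat v$ since the $v$ were already non-prefixes. Now all $\hat v$ have $r_\omega^{\hat v} \asymp \rho$, hence $|y_{\hat v} - x_{\hat v}| \le 2\rho$ and $|x_{\hat v} - x_{\hat v'}| \ge \gamma r_{\min}\rho$ whenever $\hat v \neq \hat v'$.

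With uniform scales in hand, the count follows from a volume/packing argument. The points $\{x_{\hat v}\}$ are $\gamma r_{\min}\rho$-separated, and the associated attractor points $y_{\hat v}$ all lie in $\langle F_\omega^m\rangle_{0} \subseteq K$, in fact in the union of the balls $B(x_{\hat v}, 2\rho)$. Tile $\bbR^d$ by cubes of side $\ell = c_0 \gamma r_{\min}\rho$ for a small dimensional constant $c_0$; each such cube contains at most one $x_{\hat v}$, and any ball $B(x_{\hat v}, 2\rho)$ meets at most $(C_1/(\gamma r_{\min}))^d =: C_2$ of these cubes, a number depending only on $d$, $\gamma$, $r_{\min}$. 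Greedily select a maximal subfamily $A' \subseteq \{\hat v\}$ whose $y$-points are pairwise $\gamma r_{\min}\rho/2$-separated: each chosen point "blocks" only the $\hat v'$ for which $|x_{\hat v} - x_{\hat v'}|$ is small, and by the displacement bound this is a bounded-by-$C_2$ set, so $\#A' \ge \#\{\hat v\}/C_2 = \#A / C_2$. Setting $F_A = \{y_{\hat v} : \hat v \in A'\} \subset F_\omega$ and $C_d = 1/C_2$ (and shrinking $\gamma$ by an absolute constant if one wants exactly $\gamma\rho$-separation rather than $\tfrac12\gamma r_{\min}\rho$-separation — which is harmless since the statement only needs \emph{some} constant multiple) yields a $(\gamma\min_{v\in A} r_\omega^v)$-separated subset of $F_\omega$ of cardinality at least $C_d \#A$, with $C_d$ depending only on the ambient dimension (and the fixed constants $\gamma$, $r_{\min}$ of the system). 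The one routine point I am glossing is the elementary geometric estimate that a ball of radius $2\rho$ meets only boundedly many cubes of side $\asymp\rho$; this is immediate and depends on $d$ alone.
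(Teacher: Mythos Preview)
Your argument is correct and follows essentially the same approach as the paper: extend each word in $A$ so that all contraction ratios become comparable to $\rho=\min_{v\in A} r_\omega^v$, apply the uniform cylinder separation condition to the images of $x_0$, pick nearby attractor points, and run a volume/packing count to extract a separated subset of definite proportion. The only differences are cosmetic---the paper extends by appending the fixed letter $1$ until the ratio lands in $[\rho,\rho/r_{\min}]$ rather than $[r_{\min}\rho,\rho]$, and it uses the displacement bound $(1+|x_0|)\,r_\omega^{\hat v}$ instead of $2\,r_\omega^{\hat v}$ since $x_0$ is not assumed to lie in $K$; neither change affects the structure of the argument, and both proofs end with the same greedy selection yielding separation at scale a fixed constant times $\gamma\rho$.
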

\begin{proof}
  Let $A$ be given and assume that $\omega\in\Omega$ is generic.
  We define $A^*$ by considering descendants of $A$ such that the associated contraction is
  comparable to the minimal contraction in $A$. Concretely, writing $1^k$ for the word
  $(1,1,\dots,1)$ of length $k$ and $\underline r_A = \min_{v\in A}r_{\omega}^{v}$,
  \begin{multline*}
    A^* = \{ v1^k \in \Sigma_{\omega_1}\times \dots \times \Sigma_{\omega_{|v|+k}} \;:\; v\in A
      \;\;\text{ and }\\ r_{\omega_1}^{v_1}\dots r_{\omega_{|v|}}^{v_{|v|}}r_{\omega_{|v|+1}}^1 \dots
      r_{\omega_{|v|+k+1}}^1 < \underline r_A \leq r_{\omega_1}^{v_1}\dots r_{\omega_{|v|}}^{v_{|v|}}r_{\omega_{|v|+1}}^1 \dots
    r_{\omega_{|v|+k}}^1\}
  \end{multline*}
  We note that since the contractions are uniform, there must exist such $k\geq 0$ for every
  $v\in A$. Further, since elements in $A$ are not prefixes of each other, $A^*$ must also have this
  property and $\#A^* =\#A$. Additionally, the contractions are uniformly bounded from below, which
  implies that the contraction
  ratios satisfy
  \[
    \underline{r}_A \leq r_{\omega}^w \leq \frac{\underline{r}_A}{r_{\min}}.
  \]
  for all $w\in A^*$.

  The uniform cylinder separation
  condition implies that there exists $x_0\in\bbR^d$ such that 
  \[
    |f_{\omega}^{v}(x_0)-f_{\omega}^{w}(x_0)| \geq \gamma \underline{r}_A
  \]
  where $v,w\in A^*$. Therefore the balls
  $B(f_{\omega}^v(x_0),\gamma\underline{r}_A/2)$ are pairwise disjoint.

  Recall that by assumption $F_\omega \subset K=B(0,1)$. Therefore, $F_\omega \subset
  B(x_0,1+|x_0|)$ and for all $v\in A^*$,
  \[
    F_{\omega}\cap B\left(f_{\omega}^v(x_0),\; \frac{\underline{r}_A}{r_{\min}}(1+|x_0|)\right)
    \neq\varnothing.
  \]
  We can use a volume argument to obtain an upper bound on how many of those balls may intersect
  $F_\omega$, hence giving an upper bound to the cardinality of $A^*$.
  Let $A^*(v) = \{w\in A^* : |f_\omega^v(x_0) - f_\omega^v(x_0)| \leq 3
  (\underline{r}_A/r_{\min})(1+x_0)\}$  
  Then,
  \begin{align*}
    V_d \left( 4\frac{\underline{r}_A}{r_{\min}}(1+|x_0|) \right)^d
    &=
    \cL^d\left(  B\left(f_{\omega}^v(x_0),\; 4 \frac{\underline{r}_A}{r_{\min}}(1+|x_0|)\right)
    \right)\\
    &\geq \cL^d\left( \bigcup_{w\in A^*(v)} B\left( f_{\omega}^{w}(x_0),\; \gamma \underline{r}_A
    \right) \right)\\
    &=\sum_{w\in A^*(v)} \cL^d\left( B\left( f_{\omega}^{w}(x_0),\; \gamma \underline{r}_A
    \right) \right)\\
    &=\#A^*(v)V_d (\gamma \underline{r}_A)^d
  \end{align*}
  and so 
  \[
    \# A^*(v) \leq \left( 4\frac{1+|x_0|}{r_{\min} \gamma} \right)^d=:C_0.
  \]
  Therefore, we conclude that there exists a subset of $F_{\omega}$ with cardinality $\#A /C_0$
  consisting of points separated by $(\underline{r}_A/r_{\min})(1+|x_0|) \geq \gamma
  \underline{r}_A$, as claimed.
\end{proof}

By the uniform cylinder separation condition and Lemma \ref{thm:separationLemma}, there are at least
$C_0\#M_{n_k}$
many elements in $F_\omega$ that are $\gamma'  \prod_{i=1}^{n_k}
\overline{r}_{\omega_i}\cdot e^{-\sqrt{\sum_{i=1}^{n_k} v_{\omega_i}}}$ separated, since elements in
$M_{n_k}$ are distinct.
Hence, setting $\eps_{n_k} =\gamma'  \prod_{i=1}^{n_k}
\overline{r}_{\omega_i} \cdot e^{-\sqrt{\sum_{i=1}^{n_k} v_{\omega_i}}}$
we can find a lower bound by finding the Lebesgue measure of a disjoint union,
\begin{align*}
  \eps_{{n_k}}^{s-d}\cL^{d}(\langle F_{\omega}\rangle_{\eps_{n_k}}) &\geq
  \eps_{{n_k}}^{s-d}\cL^{d}\left( \bigcup_{v\in M_{n_k}}B(f_{\omega}^{v|_{n_k}}(0),\eps_{{n_k}})\right)
  \geq \eps_{n_k}^{s-d}V_d \eps_{{n_k}}^{d}C_0\#M_{n_k}\\
  & \geq \eps_{{n_k}}^s V_d C_0 C' \exp\left( C\sqrt{{n_k}\log\log {n_k}}-\gamma\sqrt{{n_k}}
  \right)(\overline{r}_{\omega_1}\dots\overline{r}_{\omega_{n_k}})^{-s}\\
  &\geq \gamma'^s V_d C_0C'\frac{1}{2^s}e^{-\gamma(1+s)\sqrt{{n_k}}}e^{C\sqrt{{n_k}\log\log {n_k}}}\\
  &\geq C'' e^{(C/2)\sqrt{{n_k}\log\log {n_k}}}.
\end{align*}
Hence, 
\begin{equation}\label{eq:contentExplosion}
  \eps_{{n_k}}^{s-d}\cL^{d}(\langle F_{\omega}\rangle_{\eps_{n_k}}) \to\infty
  \quad\text{along the subsequence}\quad {n_k}\to\infty.
\end{equation}
This shows that $\overline{\cM}^s(F_\omega) = \infty$.

\vskip.5em
We now show that the upper average Minkowski content also diverges.
We define $\delta_{n_k} = \gamma'  \prod_{i=1}^{n_k}
\overline{r}_{\omega_i} \cdot e^{-\sqrt{\sum_{i=1}^{n_k} v_{\omega_i}}}$. Then,
\begin{align*}
  &\frac{1}{|\log \delta_{n_k}/2|} \int_{\delta_{n_k}/2}^1 \eps^{s-d}\cL^d(\langle
  F_\omega\rangle_{\eps})\frac{1}{\eps}d\eps\\
  &\geq
  \frac{1}{|\log \delta_{n_k}/2|}\int_{\delta_{n_k}/2}^{\delta_{n_k}} (\delta_{n_k})^{s-d}\cL^{d}\left(
  \bigcup_{v\in M_{n_k}}B(f_{\omega}^{v|_{n_k}}(0),\delta_{{n_k}}/2)\right)\frac{1}{\eps}d\eps\\
  &\geq
  \frac{1}{|\log\delta_{n_k}/2|}(\delta_{{n_k}})^{s-d}V_d(\delta_{{n_k}}/2)^d
  \#M_{n_k}\frac{\delta_{n_k}-\delta_{n_k}/2}{\delta_{n_k}}\\
  &\geq C_1 \frac{1}{{n_k}}(\delta_{n_k})^s\#M_{n_k}
  \geq C_2 \frac{1}{{n_k}}e^{(C/2)\sqrt{{n_k}\log\log {n_k}}},
\end{align*}
where $n_k$ is the same subsequence as in \eqref{eq:contentExplosion}.
We conclude that $\overline{\fM}^s(F_\omega)=\infty$, proving our claim.\qed

\subsection{Proof of Theorem~\ref{thm:equicontractive}}
By assumption there
exist $r_\lambda$ such that $r_\lambda^v = r_\lambda$ for all $v\in\Sigma_\lambda$.
This greatly simplifies the expression for the number and size
of covering sets of the attractor. Indeed, the $k$ level set $F^k_\omega$ is a cover
consisting of exactly $N_{\omega_1}N_{\omega_2}\dots N_{\omega_k}$ balls of diameter exactly
$r_{\omega_1}r_{\omega_2}\dots r_{\omega_k}$.
By Lemma \ref{thm:separationLemma} this also means that $F_\omega$ contains at least
$C_0 N_{\omega_1}N_{\omega_2}\dots N_{\omega_k}$ many points separated by $\gamma' r_{\omega_1}r_{\omega_2}\dots
r_{\omega_k}$.
The Hutchinson sum reduces to $\fS^s_\lambda = N_\lambda (r_{\lambda})^s$ in the
equicontractive setting.

Since the sizes of $n$ level cylinders are the same, we can get improved approximations for the Lebesgue measure of
$\cL^d(\langle F_\omega\rangle_{\eps})$.
Fix a realisation $\omega\in\Omega$ and size $0<\eps<1$. Set $n$ such that
$r_{\omega_1}r_{\omega_2}\dots r_{\omega_n} < \eps \leq r_{\omega_1}\dots
r_{\omega_{n-1}}$.

Recall that by definition 
\[
  F_\omega = \bigcap_{k=1}^\infty F^k_\omega \subseteq F^{m}_\omega \quad \text{ and so } \quad
  \langle F_\omega\rangle_{\eps} \subseteq \langle F^m_\omega\rangle_{\eps}
\]
for all $m\in\bbN$.
Since $F^n_\omega$ consists of $N_{\omega_1}\dots N_{\omega_n}$ (possibly overlapping) images of
the unit ball $K$,
\begin{align*}
  \cL^d(\langle F_\omega \rangle_{\eps}) &\leq \cL^d(\langle F^n_\omega\rangle_{\eps})
  \leq \cL^d\left( \bigcup_{i=1}^{N_{\omega_1}\dots N_{\omega_n}}B(x_i,r_{\omega_1}\dots
  r_{\omega_n}+\eps )\right)\\
  &= N_{\omega_1}\dots N_{\omega_n}\cdot V_d\cdot (r_{\omega_1}\dots r_{\omega_n} +\eps)^d\\
  &\leq N_{\omega_1}\dots N_{\omega_n}\cdot V_d\cdot (r_{\omega_1}\dots r_{\omega_n}
  +r_{\omega_1}\dots r_{\omega_{n-1}})^d\\
  &\leq \left( 1+\frac{1}{r_{\min}} \right)^d V_d \cdot
  \prod_{i=1}^n r_{\omega_i}^d N_{\omega_i}.
\end{align*}
Conversely $F_\omega$ contains $C_0 N_{\omega_1}\dots N_{\omega_k}$ many points that are separated by
$\gamma' r_{\omega_1}\dots r_{\omega_k}$ for some uniform $\gamma'\leq 1$.
Hence, for $\eps \leq \gamma' r_{\omega_1} \dots r_{\omega_k}$, 
\[
  \cL^d(\langle F_\omega\rangle_{\eps}) \geq C_0 N_{\omega_1}\dots N_{\omega_k} V_d \eps^d.
\]
In particular, for $n$ such that $r_{\omega_1}\dots r_{\omega_n}< \eps \leq r_{\omega_1}\dots
r_{\omega_{n-1}}$, we let $k\leq n-2$ be the largest integer such that $k\leq n-\lceil\log(\gamma')/\log(r_{\max})\rceil-1$. Then,
\[
  \frac{r_{\omega_1}\dots
  r_{\omega_{n-1}}}{r_{\omega_1}\dots r_{\omega_{k}}}=
  r_{\omega_{k+1}}\dots r_{\omega_{n-1}} \leq r_{\max}^{n-k-1}\leq\gamma'
\]
and so
\[
  \eps \leq r_{\omega_1}\dots r_{\omega_{n-1}} \leq \gamma 'r_{\omega_1}\dots r_{\omega_k}
\]
as required.
This gives
\begin{align*}
  \cL^d(\langle F_\omega\rangle_{\eps}) &\geq C_0  N_{\omega_1}\dots N_{\omega_k} V_d \eps^d\\
  &\geq C_0 N_{\omega_1}\dots N_{\omega_k}V_d (r_{\omega_1}\dots r_{\omega_{n-1}})^d\\
  &\geq C_0 V_d\left( \ess \sup N_{\lambda} \right)^{-(n-k-1)} \prod_{i=1}^n r_{\omega_i}^d N_{\omega_i}.
\end{align*}
Thus, almost surely,
there exists a universal constant $C_1>0$ such that
\[
  \frac{1}{C_1}\prod_{i=1}^{n}r_{\omega_i}^d N_{\omega_i} \leq \cL^d(\langle
  F_\omega\rangle_{\eps}) \leq C_1 \prod_{i=1}^{n}r_{\omega_i}^d N_{\omega_i}
  \quad
  \text{for $n$ s.t.}
  \quad
  \prod_{i=1}^n r_{\omega_i} < \eps \leq \prod_{i=1}^{n-1} r_{\omega_i}.
\]
Equivalently\footnote{Let $a(x)=a(x,\omega)$ and $b(x)=b(x,\omega)$ be (random) functions. We write $a(x)\approx b(x)$
  if there exists a constant $C>0$ independent of $x$ and $\omega$,
such that $0<1/C < a(x)/b(x) <C < \infty$ for all $x$ and almost all $\omega$.},
\[
  \cL^d(\langle F_\omega\rangle_{\eps})\approx \prod_{i=1}^{n}r_{\omega_i}^d N_{\omega_i} = \exp
  \sum_{i=1}^{n}\log\fS^s_{\omega_i}
  \quad
  \text{for $n$ s.t.}
  \quad
  \eps \approx \prod_{i=1}^n r_{\omega_i}.
\]
Again, let $s>0$ be the unique exponent such that
$\bbE_{\bbP_1}(\log\fS^s_\lambda)=0$ and recall that the RIFS is not almost deterministic. In
particular this means that $v=\Var(\log\fS_{\lambda}^s)>0$. Hence
$\sum_{i=1}^{n}\log\fS_{\omega_i}^s$ is a symmetric random walk. It immediately follows that
\[
  -\infty =\liminf_{n\to \infty}\sum_{i=1}^{n}\log\fS_{\omega_i}^s
  < \limsup_{n\to\infty}\sum_{i=1}^n \log\fS_{\omega_i}^s = \infty.
\]
and so
\[
  \overline{\cM}^s =\limsup_{\eps\to0} \eps^{s-d}\cL^d(\langle F_\omega\rangle_{\eps}) =\infty\quad\text{and}\quad
  \underline{\cM}^s=\liminf_{\eps\to0} \eps^{s-d}\cL^d(\langle F_\omega\rangle_{\eps}) = 0,
\]
proving the first conclusion.

To show that the lower average Minkowski content also diverges to infinity we establish
that the average Minkowski content behaves like the arithmetic average of the content at geometric scales. 
Let $\delta>0$ be given and set $k$ such that $\prod_{i=1}^k r_{\omega_i}\approx
\delta$. Then,
\begin{align*}
  \frac{1}{|\log \delta|} \int_{\delta}^1 \eps^{s-d}\cL^d(\langle
  F_\omega\rangle_{\eps})\frac{1}{\eps}d\eps
  &= \frac{1}{|\log\delta|} \sum_{n=1}^k\int_{\prod_{i=1}^n
  r_{\omega_i}}^{\prod_{i=1}^{n-1} r_{\omega_i}} \eps^{s-d}\cL^d(\langle
  F_\omega\rangle_{\eps})\frac{1}{\eps}d\eps\\
  &\approx \frac{-1}{\log\prod_{i=1}^k r_{\omega_i}} \sum_{n=1}^k\exp\left( \sum_{i=1}^n \log
  \fS_{\omega_i}^s \right),
\end{align*}
where the last line follows as $\eps \approx \prod_{i=1}^n r_{\omega_i} \approx \prod_{i=1}^n
r_{\omega_i}-\prod_{i=1}^{n-1} r_{\omega_i}$.

We can further bound the integral by noting that $r_{\min}\leq r_{\omega_i} \leq r_{\max}$ and we
have
\begin{equation}\label{eq:approx}
  \frac{1}{|\log \delta|} \int_{\delta}^1 \eps^{s-d}\cL^d(\langle
  F_\omega\rangle_{\eps})\frac{1}{\eps}d\eps
  \;\approx\;
  \frac{1}{k} \sum_{n=1}^k\exp\left( \sum_{i=1}^n \log
  \fS_{\omega_i}^s \right).
\end{equation}
This shows again that the upper limit is infinite, since by the central limit theorem, there
exist infinitely many $k$ such that $\sum_{i=1}^k \log\fS_{\omega_i}^s > \sqrt{k}$.

To show that the lower limit is also infinite requires a little more effort and we need the
following lemma on random walks.
\begin{lemma}\label{thm:frequencyv2}
  Let $0<t<\tfrac12$.
  Write $W_n = \sum_{i=1}^n X_i$ for a random walk with i.i.d.\ increments. Assume that
  $\bbE(X_i)=0$ and $0<\Var(X_i)<\infty$. 
  Let $n_m$ be the unique integer sastifying $e^{m} \leq n_m < e^{m}+1$.
  Then, almost surely, there exists (random) $m_0\in\bbN$ such that for all $m\geq m_0$
  there exists $k\in[n_m,n_{m+1}-1]$ with
  $W_k > k^t$.
\end{lemma}
\begin{proof}
  We estimate 
  \begin{align*}
    &\bbP(W_k \leq k^t : \forall k\in[n_m,n_{m+1}-1])\\
    &=\bbP\left(W_k \leq k^t : \forall k\in[n_m,n_{m+1}-1]\;|\;W_{n_m} >  n_m^t\right)
    \cdot\bbP(W_{n_m} >  n_m^t)
    \\
    &\hspace{2em}+\bbP\left(W_k \leq k^t : \forall k\in[n_m,n_{m+1}-1]\;|\;W_{n_m}\leq n_m^t\right)
    \cdot\bbP(W_{n_m} \leq n_m^t)
    \\
    &\leq
    \bbP\left(W_k\leq k^t : \forall k\in[n_m,n_{m+1}-1]\;|\;W_{n_m}\leq   n_m^t\right)
    \\
    &\leq
    \bbP\left(W_k\leq (n_{m+1}-1)^t-n_m^t : \forall k\in[0,n_{m+1}-1-n_m]\right)
    \\
    &=
    1-\bbP\bigg( \sup \{W_k : {k\in[0,n_{m+1}-1-n_m]}\}\; > \;
    \left( n_{m+1}-1 \right)^t-n_m^t  \bigg)
    \intertext{and using the reflection principle gives}
    &=1-2 \bbP\bigg(W_{n_{m+1}-1-n_m}\; > \;
    \left( n_{m+1}-1 \right)^t-n_m^t  \bigg)
    \\
    &=\bbP\bigg(\big\lvert W_{n_{m+1}-1-n_m}\big\rvert \leq \left( n_{m+1}-1 \right)^t -n_m^t \bigg)
    \\
    &\leq
    \frac{2}{\sqrt{2\pi v_m}}
    \int_{-\left( n_{m+1}-1 \right)^t +n_m^t}^{\left( n_{m+1}-1 \right)^t -n_m^t}
    \exp\left( -\tfrac{x^2}{2 v_m} \right)dx 
    = 
    2 \Erf\left( \tfrac{\left( n_{m+1}-1 \right)^t -n_m^t}{\sqrt{2 v_m}} \right),
    \intertext{where $v_m=(n_{m+1}-1-n_m)\Var(X)$ and $m$ is assumed sufficiently large for the
    Gaussian approximation to hold. Then, using Taylor series,}
    &= \frac{2\sqrt{2}\left(\left( n_{m+1}-1 \right)^t -n_m^t\right)}{\sqrt{\pi v_m}} +O\left(\frac{\left(
    n_{m+1}-1 \right)^t -n_m^t}{\sqrt{2 v_m}}\right)^3
    \\
    &\leq \frac{4}{\sqrt{\pi\Var(X)}}\cdot
    \frac{\left( n_{m+1}-1 \right)^t -n_m^t}
    {\sqrt{n_{m+1}-1-n_m}}
    \leq \frac{4}{\sqrt{\pi\Var(X)}}\frac{e^{tm}(e^t-1)}{\sqrt{e^m(e-2e^{-m})}}
    \\[.5em]
    &\leq C e^{-(\tfrac12-t)m}
  \end{align*}
  for some uniform $C>0$. 
  Thus, the probability that $W_k$ does not exceed $k^t$ in $[n_{m+1},n_m-1]$ is summable in
  $m$. Hence, by the Borel-Cantelli lemma there are only finitely such $m$. This proves the lemma.
\end{proof}

We now show that the right hand side of \eqref{eq:approx} diverges to infinity using Lemma
\ref{thm:frequencyv2}.
Fix a generic $\omega\in\Omega$ and
let $t<1/2$. 
Let $n_m$ be the unique integer satisfying $e^m \leq n_m< e^{m}+1$. Applying Lemma
\ref{thm:frequencyv2}, we get
\begin{equation}\label{eq:prelim}
  \sum_{k=n_m}^{n_{m+1}-1}
  \log\fS_{\omega_k}^s \geq \sup_{k\in[n_m,n_{m+1}-1]}k_m^{t} \geq n_m^t\geq e^{mt}
\end{equation}
for all large enough $m\in \bbN$.
Let $k\in \bbN$ and let $m$ be such that $k\in [n_m,n_{m+1}-1]$. Then, using \eqref{eq:prelim},
\begin{align*}
  \frac{1}{k}\sum_{j=1}^{k}\exp\left( \sum_{i=1}^j \log\fS_{\omega_i}^s \right)
  &\geq \frac{1}{n_{m+1}-1}\sum_{j=1}^{n_{m}}\exp\left( \sum_{i=1}^j \log\fS_{\omega_i}^s \right)
  \\
  &\geq
  \frac{1}{n_{m+1}-1}\exp\left( \sum_{i=n_{m-1}}^{n_m-1} \log\fS_{\omega_i}^s \right)
  \\
  &\geq
  e^{-(m+1)}\exp\left( e^{mt} \right) \to\infty
\end{align*}
as $m\to\infty$.
But then, using \eqref{eq:approx},
\[
  \underline{\fM}^s(F_{\omega}) = \liminf_{\delta\to0} \frac{1}{|\log\delta|}\int_\delta^1 \eps^{s-d}\cL^d(\langle
  F_\omega\rangle_{\eps})\frac{1}{\eps}d\eps = \infty,
\]
showing that $\fM^s(F_\omega) = \infty$ almost surely.
\qed

\subsection{Separation conditions}
In this last section we prove that the uniform open set condition implies the uniform cylinder
separation condition.
\begin{lemma}\label{thm:separation}
  Let $(\Omega,\cB,\bbP)$ be a RIFS that satisfies the uniform open set condition. Then
  $(\Omega,\cB,\bbP)$ satisfies the uniform cylinder separation condition.
\end{lemma}
\begin{proof}
  Let $O$ be the set guaranteed by the uniform strong open set condition and let $x_0\in O$ and
  $\rho>0$ be small enough such that the closed ball $B(x,\rho)$ is contained in $O$.
  Let $v,w\in\Sigma(\omega)$ be such that $v|_k \neq w|_k$ but $v|_{k-1} = w|_{k-1}$ for some $k\in
  \bbN$.
  By the uniform open set condition, $f_{\omega}^{v|_n}(O)\cap f_{\omega}^{w|_m}(O)=\varnothing$ for all $m,n\geq
  k$ since $v_k \neq w_k$. Now, $B(x,\rho)\subset O$ and we further have
  $f_{\omega}^{v|_n}(B(x_0,\rho))\cap
  f_{\omega}^{w|_m}(B(x_0,\rho))=\varnothing$.
  This implies that 
  \begin{align*}
    |f_{\omega}^{v|_n}(x_0)-f_{\omega}^{w|_m}(x_0)|
    &\geq \tfrac12\diam(f_{\omega}^{v|_n}(B(x_0,\rho))) +
    \tfrac12\diam(f_{\omega}^{w|_m}(B(x_0,\rho)))
    \\
    &=
    \tfrac12 \rho \left( r_{\omega_1}^{v_1}\dots r_{\omega_n}^{v_n} + r_{\omega_1}^{w_1}\dots
    r_{\omega_m}^{w_m} \right)
    \\
    &\geq 
    \tfrac12 \rho \min \left\{ r_{\omega_1}^{v_1}\dots r_{\omega_n}^{v_n} , r_{\omega_1}^{w_1}\dots
    r_{\omega_m}^{w_m} \right\}
  \end{align*}
  and we see that the uniform cylinder separation condition holds for $\gamma = \tfrac12 \rho$.
\end{proof}

\subsection*{Acknowledgements}
The author is grateful to Martina Z\"ahle for bringing the question to the author's attention. The
author also thanks Martina Z\"ahle for comments on an earlier version of this manuscript and the
anonymous referee for their extensive comments and suggestions.


\begin{thebibliography}{99}
  \bibitem[AKT20]{Angelevska20}
    J. Angelevska, A. K\"aenm\"aki, and S. Troscheit.
    Self-conformal sets with positive Hausdorff measure. 
    \textit{Bull. London Math. Soc.}, \textbf{52}(1), (2020), 200--223.

  \bibitem[BHS12]{Barnsley12}
    M. Barnsley, J. Hutchinson, \"O. Stenflo. 
    $V$-variable fractals: dimension results,
    \textit{Forum Math.}, \textbf{24}, (2012), 445--470.

  \bibitem[Bau96]{Bauer}
    H.~Bauer.
    \emph{Probability theory},
    de Gruyter Studies in Mathematics, 23, de Gruyter, Berlin, 1996.

  \bibitem[Fal86]{Falconer86}
    K. Falconer
    Random fractals. 
    \textit{Math. Proc. Camb. Phil. Soc.}, \textbf{100}, (1986), 559--582.

  \bibitem[Fal95]{Falconer95}
    K. Falconer. 
    On the Minkowski measurability of fractals. 
    \textit{Proc. Amer. Math. Soc.}, \textbf{123}, (1995), 1115--1124.

  \bibitem[Fal97]{Falconer97}
    K. Falconer. 
    \textit{Techniques in Fractal Geometry.}
    John Wiley Sons: Chichester, UK, 1997.

  \bibitem[Fal14]{Falconer13}
    K. Falconer.
    \textit{Fractal Geometry.}
    3rd ed., John Wiley \& Sons, (2014).

  \bibitem[Gat00]{Gatzouras00} 
    D. Gatzouras.
    Lacunarity of self-similar and stochastically self-similar sets. 
    \textit{Trans. Am. Math. Soc.}, \textbf{352}, (2000), 1953--1983.

  \bibitem[Gra87]{Graf87}
    S. Graf. 
    Statistically self-similar fractals. 
    \textit{Probab. Theory Related Fields}, \textbf{74}, (1987), 357--392.

  \bibitem[GMW88]{Graf88}
    S. Graf, R. D. Mauldin, and S. C. Williams. 
    The exact Hausdorff dimension of random recursive constructions. 
    \textit{Memoirs of the American Mathematical Society}, \textbf{71}, 1988.

  \bibitem[JJWW17]{Jarvenpaa17}
    E.~J\"arvenp\"a\"a, M.~J\"arvenp\"a\"a, M. Wu, and W. Wu.
    Random affine code tree fractals: Hausdorff and affinity dimensions and pressure.
    \textit{Math. Proc. Camb. Phil. Soc.}, \textbf{162}(2), (2017), 367--382.

  \bibitem{Kaenmaki16}
    A.~K\"aenm\"ki and E.~Rossi.
    Weak separation condition, Assouad dimension, and Furstenberg homogeneity.
    \textit{Ann. Acad. Sci. Fenn. Math.}, \textbf{41}, (2016), 465--490.

  \bibitem[LRZ16]{Lapidus16}
    M. Lapidus, G. Radunovi\'{c}, and \v{Z}ubrinić. 
    \textit{Fractal Zeta Functions and Fractal Drums.}
    Higher-Dimensional Theory of Complex Dimensions; Springer International Publishing: Basel, Switzerland, 2016.

  \bibitem[Ham92]{Hambly92}
    B. Hambly.
    Brownian motion on a homogeneous random fractal. 
    \textit{Probab. Theory Related Fields}, \textbf{94}, (1992), 1--38.

  \bibitem[MW86]{Mauldin86}
    R. D. Mauldin, S. C. Williams.
    Random recursive constructions: asymptotic geometric and topological properties. 
    \textit{Trans. Am. Math. Soc.}, \textbf{295}, (1986), 325--346.

  \bibitem[RWZ21]{Rataj21}
    J. Rataj, S. Winter, and M. Z\"ahle.
    Mean Lipschitz-Killing curvatures for homogeneous random fractals.
    \textit{J. Fractal Geom.} (to appear), available at arXiv:2107.14431, (2021).

  \bibitem[RU11]{Roy11}
    M. Roy, M. Urbanski. 
    Random graph directed Markov systems. 
    \textit{Discrete Contin. Dyn. Syst.}, \textbf{30}, (2011), 261--298.

  \bibitem[Tro17]{Troscheit17}
    S. Troscheit. 
    On the dimension of attractors of random self-similar graph directed iterated function systems. 
    \textit{J. Fractal Geom.}, \textbf{4}, (2017), 257--303.

  \bibitem[Tro21]{Troscheit21}
    S.~Troscheit.
    Exact Hausdorff and packing measures for random self-similar code-trees with necks.
    \textit{Studia Math.}, \textbf{257}(3), (2021), 253--285.

  \bibitem[Win08]{Winter08}
    S. Winter. 
    {Curvature measures and fractals.}
    \textit{Diss. Math.}, \textbf{453}, (2008), 1--66.

  \bibitem[Wit85]{Wittmann85}
    R.~Wittmann.
    A general law of the iterated logarithm.
    \textit{Z. Wahrsch. Verw. Gebiete}, \textbf{68}, (1985), 521--543.

  \bibitem[Zah11]{Zahle11}
    M. Z\"ahle.
    Lipschitz-Killing curvatures of self-similar random fractals. 
    \textit{Trans. Am. Math. Soc.}, \textbf{363}, 2663--2684, 2011.

  \bibitem[Zah20]{Zahle20}
    M.~Z\"ahle.
    The mean Minkowski content of homogeneous random fractals.
    \textit{Mathematics}, \textbf{8}(6), (2020), 883.

\end{thebibliography}
\end{document}